\newtheorem{defn}{Definition}
\newtheorem{thm}[defn]{Theorem}
\newtheorem{cor}[defn]{Corollary}
\newtheorem{lem}[defn]{Lemma}
\newtheorem{prop}[defn]{Proposition}
\theoremstyle{remark}
\newtheorem{rem}[defn]{Remark}
\theoremstyle{remark}
\newtheorem{exam}{Example}
\numberwithin{equation}{section} \numberwithin{defn}{section}
\newcommand\aut{\operatorname{Aut}}
\newcommand\ed{\operatorname{End}}
\newcommand\Ker{\operatorname{Ker}}
\renewcommand\dim{\operatorname{dim}}
\newcommand\Det{\operatorname{det}}
\newcommand\limpl[1]{\underset{#1}\varprojlim\,}
\def\mod #1/#2{\kern.06em{\raise1.2pt\hbox{$#1$}}/
      {\raise-1.2pt\hbox{$#2$}}}
\newcommand\B{{\mathcal B}}
\renewcommand\tilde{\widetilde}
\renewcommand\lim{\limpl{A\in\B}}
\newcommand\beq{
      \setcounter{equation}{\value{defn}}\addtocounter{defn}1
      \begin{equation}}
\begin{document}

\title[Classification of Endomorphisms with Annihilator]{Classification of Endomorphisms \\ with an Annihilating Polynomial \\ on Arbitrary Vector Spaces}
\author{Fernando Pablos Romo}
\address{Departamento de
Matem\'aticas and Instituto Universitario de F\'{\i}sica Fundamental y Matem\'aticas, Universidad de Salamanca, Plaza de la Merced 1-4,
37008 Salamanca, Espa\~na} \email{fpablos@usal.es}
\keywords{Endomorphism, Annihilating Polynomial, Classification Problem.}
\thanks{2010 Mathematics Subject Classification: 15A03, 15A04.
\\ This work is partially supported by the
Spanish Government research contract no. MTM2012-32342.}

\maketitle

\begin{abstract} The aim of this work is to offer a solution to the problem of the classification of endomorphisms with an annihilating polynomial on arbitrary vector spaces. For these endomorphisms we provide a family of invariants that allows us to classify them when the group of automorphisms acts by conjugation.
\end{abstract}

\bigskip

\setcounter{tocdepth}1

\tableofcontents
\bigskip

\section{Introduction}

The classification of mathematical objects is a classical problem: try to determine the structure of a quotient set up to some equivalence. The classification of endomorphisms on finite-dimensional vector spaces, where the group of automorphisms acts by conjugation, is well-known.

In this work we generalize this classification to endomorphisms admitting an annihilating polynomial on arbitrary vector spaces. This classification also generalizes the solution of the classification problem for finite potent endomorphisms that has been recently provided by the author in \cite{Pa}. As far as we know, a solution of the classification problem for the set of all endomorphisms with an annihilating polynomial is not stated explicitly in the literature.

The paper is organized as follows. In section \ref{s:pre} we briefly recall the basic definitions of this work: the statement of the classification problem for endomorphisms, and the well-known theory of the classification of endomorphisms on finite-dimensional vector spaces, including the description of a method to construct Jordan bases in this case.

Section \ref{s:main-classif} is devoted to giving the main result of this work. Indeed, we offer invariants to classify to classify endomorphisms with an annihilating polynomial on arbitrary vector spaces (Theorem \ref{th:class-anh}). As an example we offer the explicit description of the quotient set obtained from the classification of these endomorphisms on a countable dimensional vector space (Example \ref{ex:countable}).

\section{Preliminaries} \label{s:pre}

\subsection{The Classification Problem}\label{ss:Cl-Pr}

Let $V$ be an arbitrary $k$-vector space, and let $\ed_k (V)$ be the $k$-vector space of endomorphisms of $V$.

We have an action of the group of automorphisms of $V$, $\aut_k (V)$, on $\ed_k (V)$ by conjugation:
$$\begin{aligned} \aut_k (V) \times \ed_k (V) &\longrightarrow \ed_k (V) \\ (\tau, f) &\longmapsto \tau f {\tau}^{-1} \, .\end{aligned}$$

Let us consider a subset $X \subset \ed_k (V)$ that is invariant under the action of $\aut_k (V)$ by conjugation.

The classification problem on $X$ refers to the possible answer to the question: which is the characterization of the quotient set $X \big / {\aut_k(V)}$?.

Henceforth, if $f\in \ed_k (V)$ and $H$ is a $k$-subspace of $V$ invariant by $f$, to simplify we shall again write $f\colon H \longrightarrow H$ and $f\colon V/H \longrightarrow V/H$ to refer to the induced linear operators.

\subsection{Classification of Endomorphisms on Finite-Dimensional Vector Spa\-ces}\label{ss:clas-finite}
\label{ss:ACK}

The solution of the classification problem for endomorphisms on vector spaces of finite dimension is well-known.

Let $E$ be a finite-dimensional vector space over a field $k$, and let $T\in \ed_k(E)$ be an endomorphism of $E$. We have that $T$ induces a structure of $k[x]$-module from the action $$\begin{aligned} k[x] \times E &\longrightarrow E \\ [p(x), e] &\longmapsto p(T)e \, .\end{aligned}$$\noindent We shall write $E_T$ to denote the vector space $E$ with this $k[x]$-module structure.

It is known that two endomorphisms $T, {\tilde T} \in \ed_k(E)$ are equivalent, i. e. there exists an automorphism $\tau \in \aut_k (V)$ such that $T = \tau {\tilde T} \tau^{-1}$, if and only if the $k[x]$-modules $E_T$ and $E_{{\tilde T}}$ are isomorphic.

The above action, which determines de $k[x]$-module structure of $E$ (via $T$), is equivalent to a morphism of rings $$\begin{aligned} \phi_T \colon k[x] &\longrightarrow \ed_k (E) \\ p(x) &\longmapsto \phi_T [p(x)]\, ,\end{aligned}$$\noindent where $\phi_T [p(x)] (e) = p(T) (e)$.

Since $\Ker \phi_T \ne \{0\}$, there exists a unique monic polynomial $a_T(x)$ such that $\Ker \phi_T = (a_T(x))$, $a_T(x)$ being the annihilator polynomial of $T$.

Let $a_T(x) = p_1(x)^{n_1}\cdot \dots \cdot p_r(x)^{n_r}$ be the annihilator polynomial of $T$, where $p_i(x)$ are irreducible polynomials on $k[x]$. The decomposition of $k$-vector spaces $$E = \Ker p_1(T)^{n_1}\oplus \dots \oplus \Ker p_r(T)^{n_r}\, ,$$\noindent where the subspaces $\Ker p_i(T)^{n_i} \subset E$ are invariant by $T$, is compatible with the respective $k[x]$-module structures.

Indeed, the classification of endomorphisms on finite-dimensional vector spaces is reduced to studying the $k[x]$-module structure of $\Ker p(T)^n$, $p(x)$ being an \linebreak irreducible polynomial on $k[x]$. This structure is determined by a decomposition of $k[x]$-modules:

$$\Ker p(T)^n \simeq {\big [ k[x] \big / p(x)^n \big ]}^{\nu_n (E,p(T))} \oplus \dots \oplus {\big [ k[x] \big / p(x) \big ]}^{\nu_1 (E,p(T))}\, ,$$\noindent where $\nu_n (E,p(T)) \ne 0$ and $$\nu_i (E,p(T)) = \dim_K \big ( \Ker p(T)^i \big / [\Ker p(T)^{i-1} + p(T) \Ker p(T)^{i+1}] \big )\, ,$$\noindent with $K = k[x] \big / p(x)$.

Again writing the annihilator polynomial of $T$ as $a_T(x) = p_1(x)^{n_1}\cdot \dots \cdot p_r(x)^{n_r}$, the invariant factors $\{\nu_i (E,p_j(T))\}_{1\leq j \leq r\, ; \, 1\leq i \leq n_j}$ determine the $k[x]$-module structure of $E_T$ and, therefore, they classify the endomorphism $T$.

Furthermore, if $K_j = k[x] \big / p_j(x)$, then $p_j(x)$ is a polynomial of degree $d_j = \dim_k K_j$.

We shall now describe a method for constructing Jordan bases of $E$ for $T$.

Let us first assume that $a_T(x) = p(x)^{n}$, with $p(x)$ an irreducible polynomial on $k[x]$, and let again $K = k[x]/p(x)$, with $d = \dim_k K = \text{ gr} p(x)$.

For each $1\leq i \leq n$, let $\{{\bar e}^{i}_h\}_{1\leq h \leq \nu_i (E,p(T))}$ be a basis of $$\Ker p(T)^i \big / [\Ker p(T)^{i-1} + p(T) \Ker p(T)^{i+1}]$$\noindent as a $K$-vector space.

If we write $$\pi_i \colon \Ker p(T)^i \longrightarrow \Ker p(T)^i \big / [\Ker p(T)^{i-1} + p(T) \Ker p(T)^{i+1}]$$\noindent to denote the natural projection, let $\{{e}^{i}_h\}_{1\leq h \leq \nu_i (E,p(T))}$ be a family of vectors of $\Ker p(T)^i$ such that $\pi_i ({e}^{i}_h) = {\bar e}^{i}_h$ for all $1\leq h \leq \nu_i (E,p(T))$.

\begin{rem} Since both $\Ker p(T)^i$ and $\Ker p(T)^{i-1} + p(T) \Ker p(T)^{i+1}$ are $k$-vector subspaces of $E$ invariant by $T$, then $\Ker p(T)^{i-1} + p(T) \Ker p(T)^{i+1}$ is also a $k[x]$-submodule of $(\Ker p(T)^i)_T$. We should emphasize that the definition of the quotient set $\Ker p(T)^i \big / [\Ker p(T)^{i-1} + p(T) \Ker p(T)^{i+1}]$ is independent of its structure as a $k$-vector space, $K$-vector space or $k[x]$-module (via $T$).
\end{rem}

\begin{lem} \label{lem:adsfst} If $\dim_k K = d$, then $\bigcup_{1\leq h \leq \nu_i (E,p(T))} \{{e}^{i}_h, T ({e}^{i}_h), T^{2} ({e}^{i}_h), \dots , T^{d-1} ({e}^{i}_h)\}$ is a linearly independent family of vectors of the $k$-vector space $\Ker p(T)^i$.
\end{lem}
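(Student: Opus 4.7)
The plan is to translate a hypothetical $k$-linear relation into a relation in the quotient $K$-vector space and use the $K$-linear independence of the $\bar e^i_h$ there.

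Suppose we have a relation $\sum_{h=1}^{\nu_i(E,p(T))}\sum_{j=0}^{d-1}\lambda_{h,j}\,T^j(e^i_h)=0$ in $\Ker p(T)^i$, with $\lambda_{h,j}\in k$. For each $h$, group the coefficients into a polynomial $q_h(x)=\sum_{j=0}^{d-1}\lambda_{h,j}\,x^j\in k[x]$ of degree strictly less than $d$, so the relation becomes $\sum_h q_h(T)(e^i_h)=0$. The goal is then to show that each $q_h$ must vanish.

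The key observation is that the quotient $\Ker p(T)^i\big/[\Ker p(T)^{i-1}+p(T)\Ker p(T)^{i+1}]$ carries a natural $K=k[x]/p(x)$-vector space structure: it is a $k[x]$-module because both the numerator and denominator are $k[x]$-submodules of $E_T$ (as noted in the preceding remark), and the action of $p(x)$ is trivial because $p(T)(\Ker p(T)^i)\subseteq \Ker p(T)^{i-1}$ is already contained in the denominator. In particular, $\pi_i$ commutes with any polynomial in $T$, so applying $\pi_i$ to the relation yields
$$\sum_h [q_h(x)]\cdot\bar e^i_h=0$$
in the $K$-vector space, where $[q_h(x)]$ denotes the class of $q_h(x)$ in $K$.

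Since the $\{\bar e^i_h\}$ form, by construction, a $K$-basis of the quotient, one concludes $[q_h(x)]=0$ in $K$, i.e.\ $p(x)\mid q_h(x)$ in $k[x]$, for every $h$. But $\deg q_h<d=\deg p(x)$, so necessarily $q_h=0$ and hence every $\lambda_{h,j}$ vanishes. I expect no serious obstacle beyond carefully justifying that the action of $k[x]$ on the quotient really factors through $K$ — i.e.\ the well-definedness of the $K$-vector space structure used in the basis hypothesis — and this is immediate from $p(T)\Ker p(T)^i\subseteq \Ker p(T)^{i-1}$.
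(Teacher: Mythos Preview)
Your proof is correct and follows essentially the same approach as the paper's: both translate a hypothetical $k$-linear relation $\sum \lambda_{hj} T^j(e^i_h)=0$ into a $K$-linear relation $\sum [q_h(x)]\,\bar e^i_h=0$ in the quotient and invoke the $K$-linear independence of the $\bar e^i_h$. Your version is in fact more explicit than the paper's, spelling out why $[q_h(x)]=0$ in $K$ together with $\deg q_h<d$ forces $q_h=0$, a step the paper leaves implicit.
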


\begin{proof} Let us consider a linear combination $$\sum_{\begin{aligned} 0 \leq &m \leq d-1 \\ 1 \leq &l \leq \nu_i (E,p(T)) \\ \end{aligned}} \lambda_{ml} T^m ({e}^{i}_l) = 0\, ,$$\noindent with $\lambda_{hl} \in k$.

Hence, since $$\sum_{\begin{aligned} 0 \leq &m \leq d-1 \\ 1 \leq &l \leq \nu_i (E,p(T)) \end{aligned}} \lambda_{ml} x^m {e}^{i}_l = 0$$ as a $k[x]$-module, if $\lambda_{ml} \ne 0$ in some case then $\{{\bar e}^{i}_h\}_{1\leq h \leq \nu_i (E,p(T))}$ will be a family of linearly dependent vectors of $\Ker p(T)^i \big / [\Ker p(T)^{i-1} + p(T) \Ker p(T)^{i+1}]$ as a $K$-vector space, which is impossible. Therefore, the statement is deduced.
\end{proof}

\begin{prop} \label{prop:asader} One has that  $$H^{p(T)}_i = \bigoplus_{1\leq h \leq \nu_i (E,p(T))} <{e}^{i}_h, T ({e}^{i}_h), T^{2} ({e}^{i}_h), \dots , T^{d-1} ({e}^{i}_h)>$$\noindent is a supplementary subspace of $\Ker p(T)^{i-1} + p(T) \Ker p(T)^{i+1}$ on the $k$-vector space $\Ker p(T)^i$.
\end{prop}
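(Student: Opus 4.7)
The plan is to establish the two defining conditions for a direct complement: (a) $H^{p(T)}_i \cap [\Ker p(T)^{i-1} + p(T)\Ker p(T)^{i+1}] = 0$, and (b) $H^{p(T)}_i + [\Ker p(T)^{i-1} + p(T)\Ker p(T)^{i+1}] = \Ker p(T)^i$. The bridge between the $k$-vector space picture and the quotient is the $K$-module structure on $\Ker p(T)^i / [\Ker p(T)^{i-1} + p(T)\Ker p(T)^{i+1}]$, where $K = k[x]/p(x)$: an element $T^m(e^i_h)$ maps under $\pi_i$ to $\bar{x}^m \bar{e}^i_h$, and the classes $\{\bar{x}^m\}_{0\le m\le d-1}$ form a $k$-basis of $K$.

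For (a), I would take an arbitrary vector $v = \sum_{m,l}\lambda_{ml} T^m(e^i_l) \in H^{p(T)}_i$ and assume it lies in $\Ker p(T)^{i-1} + p(T)\Ker p(T)^{i+1}$. Applying $\pi_i$ yields
\[
0 = \pi_i(v) = \sum_{l}\Bigl(\sum_{m=0}^{d-1}\lambda_{ml}\bar{x}^m\Bigr)\bar{e}^i_l
\]
in the $K$-vector space. Since the family $\{\bar e^i_l\}$ is $K$-linearly independent by hypothesis, each coefficient $\sum_m \lambda_{ml}\bar{x}^m$ vanishes in $K$; but the representative polynomial has degree strictly less than $d=\deg p(x)$, so it is already the zero polynomial in $k[x]$, whence $\lambda_{ml}=0$ for all $m,l$. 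This simultaneously proves that $H^{p(T)}_i$ is an internal direct sum of the cyclic $k$-subspaces $\langle e^i_h,T(e^i_h),\dots,T^{d-1}(e^i_h)\rangle$, consistently with Lemma \ref{lem:adsfst}.

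For (b), given $v\in \Ker p(T)^i$, the class $\pi_i(v)$ lies in the $K$-span of $\{\bar{e}^i_h\}$, so we may write $\pi_i(v) = \sum_l \bar{q}_l(x)\cdot \bar{e}^i_l$ with $\bar{q}_l \in K$. Lift each $\bar{q}_l$ to a polynomial $q_l(x) = \sum_{m=0}^{d-1}\lambda_{ml} x^m \in k[x]$ of degree below $d$, and set $w = \sum_{m,l}\lambda_{ml}T^m(e^i_l) \in H^{p(T)}_i$. Then $\pi_i(w) = \pi_i(v)$, so $v - w \in \Ker p(T)^{i-1} + p(T)\Ker p(T)^{i+1}$, giving the required decomposition.

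The main (very mild) obstacle is organising the translation between the $k[x]$-action on $\Ker p(T)^i$ and the induced $K$-action on the quotient, since the subspace $\Ker p(T)^{i-1} + p(T)\Ker p(T)^{i+1}$ is a $k[x]$-submodule that contains $p(T)\cdot \Ker p(T)^i$ (because if $w\in\Ker p(T)^i$ then $p(T)w\in\Ker p(T)^{i-1}$), which is precisely what makes the quotient a module over $K = k[x]/p(x)$ and legitimises the representative-of-degree-$<d$ argument used in both parts.
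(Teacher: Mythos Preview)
Your proof is correct and part (a) is essentially identical to the paper's argument. The only difference lies in how (b) is handled: the paper exploits the finite-dimensionality of $E$ by observing (via Lemma \ref{lem:adsfst}) that $\dim_k H^{p(T)}_i = d\cdot\nu_i(E,p(T))$ equals the $k$-dimension of the quotient, so once the intersection is shown to be zero the sum is automatically all of $\Ker p(T)^i$. You instead construct the decomposition of an arbitrary $v\in\Ker p(T)^i$ explicitly by lifting its $K$-coordinates. Your route is slightly longer here but has the advantage of not using finiteness of dimension; indeed it is exactly the argument the paper itself adopts later in Proposition \ref{prop:asader-2} for the infinite-dimensional case.
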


\begin{proof} It follows from Lemma \ref{lem:adsfst} that the dimension of $H^{p(T)}_i$ as a $k$-vector space is $d\cdot \nu_i (E,p(T))$, which coincides with $$\begin{aligned} &\dim_k \big ( \Ker p(T)^i \big / [\Ker p(T)^{i-1} + p(T) \Ker p(T)^{i+1}] \big ) = \\ &\dim_k [\Ker p(T)^i] - \dim_k [\Ker p(T)^{i-1} + p(T) \Ker p(T)^{i+1}]\, .\end{aligned}$$

Thus, to prove the claim it is sufficient to check that $$[\Ker p(T)^{i-1} + p(T) \Ker p(T)^{i+1}] \cap H^{p(T)}_i = \{0\}\, .$$

Let us now consider $e\in [\Ker p(T)^{i-1} + p(T) \Ker p(T)^{i+1}] \cap H^{p(T)}_i$. One has that $$e = \sum_{\begin{aligned} 0 \leq &m \leq d-1 \\ 1 \leq &l \leq \nu_i (E,p(T)) \end{aligned}} \gamma_{ml} T^m ({e}^{i}_l) \in [\Ker p(T)^{i-1} + p(T) \Ker p(T)^{i+1}]\, ,$$ and, bearing in mind that $\pi_i (e) = 0$, we conclude that $\gamma_{ml} = 0$ (for all $l,m$) and, thereby, $e = 0$ because, otherwise, $\{{\bar e}^{i}_h\}_{1\leq h \leq \nu_i (E,p(T))}$ will be a family of linearly dependent vectors of $\Ker p(T)^i \big / [\Ker p(T)^{i-1} + p(T) \Ker p(T)^{i+1}]$ as a $K$-vector space, which is impossible.
\end{proof}

Accordingly, bearing in mind that $$\Ker T^{i-1} \oplus p(T)^{n-i} H_n^{p(T)} \oplus \dots \oplus p(T) H_{i+1}^{p(T)} = \Ker p(T)^{i-1} + p(T)\Ker p(T)^{i+1}$$\noindent for all $1\leq i \leq n$, we have constructed a family of $k$ subspaces of $E = \Ker p(T)^n$, $\{H^{p(T)}_i\}_{1\leq i \leq n}$, such that
\begin{equation} \label{eq:decomp-anh} \begin{aligned} E &= \Ker p(T)^{n-1} \oplus H^{p(T)}_n \\ \Ker p(T)^{n-1} &= \Ker p(T)^{n-2} \oplus p(T)H^{p(T)}_n  \oplus H^{p(T)}_{n-1} \\ &\vdots \\ \Ker p(T)^{i} &= \Ker p(T)^{i-1}\oplus p(T)^{n-i}H^{p(T)}_n\oplus \dots \oplus p(T)H^{p(T)}_{i+1}  \oplus H^{p(T)}_{i} \\ &\vdots \\ \Ker p(T)^{2} &= \Ker p(T)\oplus p(T)^{n-2}H^{p(T)}_n\oplus \dots \oplus p(T)H^{p(T)}_{3}  \oplus H^{p(T)}_{2} \\ \Ker p(T) &=  p(T)^{n-1}H^{p(T)}_{n} \oplus p(T)^{n-2}H^{p(T)}_{n-1} \oplus \dots \oplus p(T)H^{p(T)}_{2} \oplus H^{p(T)}_{1}\, .  \end{aligned}\end{equation}

Thus, if we now write $$<e^{i}_h>_T = \underset {0\leq s \leq i - 1} \bigoplus  <p(T)^{s}[e^{i}_h], p(T)^{s} [T(e^{i}_h)], \dots , p(T)^{s}[T^{d -1}(e^{i}_h)]>\, ,$$\noindent then
$$E = \underset {\begin{aligned} 1 &\leq i \leq n \\ 1 \leq h &\leq \nu_i (E,p(T)) \end{aligned}} {\bigoplus} <e^{i}_h>_T\, .$$

In general, if $T\in \ed_k (E)$ with annihilator polynomial $$a_T(x) = p_1(x)^{n_1}\cdot \dots \cdot p_r(x)^{n_r}\, ,$$\noindent we have shown that there exist families of vectors $\{e^{ij}_h\}_{1\leq h \leq \nu_i (E,p_j(T))}$ with
 $$\begin{aligned} e^{ij}_h &\in \Ker p_j(T)^i \\ e^{ij}_h &\notin \Ker p_j(T)^{i-1} + p_j(T) \Ker p_j(T)^{i+1}\, ,\end{aligned}$$\noindent for all $1\leq j \leq r$ and $1\leq i \leq n_j$, such that if we set
 $$<e^{ij}_h>_T = \underset {0\leq s \leq i - 1} \bigoplus  <p_j(T)^{s}[e^{ij}_h], p_j(T)^{s} [T(e^{ij}_h)], \dots , p_j(T)^{s}[T^{d_j -1}(e^{ij}_h)]>\, ,$$\noindent then
$$E = \underset {\begin{aligned}1 &\leq j \leq r  \\ 1 &\leq i \leq n_j \\ 1 \leq h &\leq \nu_i (E,p_j(T)) \end{aligned}} {\bigoplus} <e^{ij}_h>_T\, .$$

Indeed, each family of vectors \begin{equation} \label{eq:bases-j-finite} \underset {\begin{aligned}1 &\leq j \leq r  \\ 1 &\leq i \leq n_j  \end{aligned}} {\bigcup} \{e^{ij}_h\}_{1\leq h \leq \nu_i (E,p_j(T))}\end{equation} generates a Jordan basis of $E$ for $T$.

\section{Classification of endomorphisms with an annihilating polynomial} \label{s:main-classif}

Let $V$ again be an arbitrary vector space over a ground field $k$, and let ${\mathcal B} =\{v_i\}_{i\in I}$ be a basis of $V$. It is known that $\dim (V) = \# {\mathcal B}$ is independent of the basis chosen, $\# {\mathcal B}$ being the cardinal of the set ${\mathcal B}$.

Let $X^{anh}_V$ be the subset of $\ed_k (E)$ consisting of all endomorphisms of $V$ with an annihilating polynomial, that is:
$$X^{anh}_V = \{f \in \ed_k (V) \text{ such that } p(f) = 0 \text{ for a certain } p(x)\in k[x]\, \}\, .$$

We should note that $X^{anh}_V$ is not a vector subspace of $\ed_k (E)$ (in general the sum of two endomorphisms with an annihilating polynomial does not admit an annihilating polynomial).

It is clear that the group of automorphisms of $V$, $\aut_k(V)$, acts on $X^{anh}_V$ by conjugation, because if $p(f) = 0$, then $p(\tau f \tau^{-1}) = 0$ for all $\tau \in \aut_k(V)$.

Let $\phi_{\varphi} \colon k[x] \longrightarrow \ed_k(V)$ be the morphism of rings that induces the $k[x]$-module structure in $V$. If $\varphi$ admits an annihilating polynomial, then $\Ker \phi_{\varphi} \ne \{0\}$ and there exists a unique monic polynomial $a_{\varphi} (x)$ such that $\Ker \phi_{\varphi} = (a_{\varphi} (x))$. The polynomial $a_{\varphi} (x)$ is the annihilator of $\varphi$.

The aim of this section is to offer the main result of this work: i.e. to provide the characterization of the quotient set $X^{anh}_V\big /{\aut_k(V)}$.

\begin{rem} An endomorphism $\varphi \in \ed_k (V)$ is ``finite potent''  if $\varphi^n V$ is finite
dimensional for some $n$. It is clear that a finite potent endomorphism admits an annihilating polynomial, and if $X^{fp}_V$ is the set consisting of all finite potent endomorphisms of $V$, then $X^{fp}_V$ is a subset of $X^{anh}_V$ that is invariant under the action of $\aut_k (V)$ by conjugation. We should note that the classification problem for finite potent endomorphisms has recently been solved by the author in \cite{Pa}.
\end{rem}

To start, we shall construct a Jordan Basis of $V$ for an endomorphism with an annihilating polynomial by generalizing the method described in Subsection \ref{ss:clas-finite}.

Let us consider $f \in X^{anh}_V$, and we first assume that $a_f(x) = p(x)^n$, $p(x)$ being an irreducible polynomial on $k[x]$.

As in Subsection \ref{ss:clas-finite}, we consider
$$\nu_i (V,p(f)) = \dim_K \big ( \Ker p(f)^i \big / [\Ker p(f)^{i-1} + p(f) \Ker p(f)^{i+1}] \big )\, ,$$\noindent with $K = k[x] \big / p(x)$.

Henceforth, for indexing bases $S_{{\nu}_i (V,p(f))}$ will be a set such that $\# S_{{\nu}_i (V,p(f))} = {\nu}_i (V,f)$, with $S_{{\nu}_i (V,p(f))} \cap S_{{\nu}_j (V,p(f))} = \emptyset$ for $i\ne j$. In particular, if ${\nu}_i (V,p(f))$ is a natural number $N$, then $S_{{\nu}_i (V,p(f))} = \{i_1,i_2, \dots,i_N\}$.

Similar to above, for each $1\leq i \leq n$, let $\{{\bar v}^{i}_h\}_{h \in S_{{\nu}_i (V,p(f))}}$ be a basis of $$\Ker p(f)^i \big / [\Ker p(f)^{i-1} + p(f) \Ker p(f)^{i+1}]$$\noindent as a $K$-vector space.

If we again write $$\pi_i \colon \Ker p(f)^i \longrightarrow \Ker p(f)^i \big / [\Ker p(f)^{i-1} + p(f) \Ker p(f)^{i+1}]$$\noindent to denote the natural projection, let $\{{v}^{i}_h\}_{h \in S_{{\nu}_i (V,p(f))}}$ be a family of vectors of $\Ker p(f)^i$ such that $\pi_i ({v}^{i}_h) = {\bar v}^{i}_h$ for all $h \in S_{{\nu}_i (V,p(f))}$.

\begin{lem} \label{lem:adsfst-2} If $\dim_k K = d$, then $\bigcup_{h \in S_{{\nu}_i (V,p(f))}} \{{v}^{i}_h, f ({v}^{i}_h), f^{2} ({v}^{i}_h), \dots , f^{d-1} ({v}^{i}_h)\}$ is a linearly independent family of vectors of the $k$-vector space $\Ker p(f)^i$.
\end{lem}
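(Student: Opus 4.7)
The plan is to mimic the proof of Lemma \ref{lem:adsfst} essentially verbatim, observing that the only novelty here is that the index set $S_{\nu_i(V,p(f))}$ is allowed to be of arbitrary cardinality. Since linear independence in a $k$-vector space is by definition tested on finite subfamilies, this extra generality does not actually introduce any new difficulty: every alleged nontrivial dependence relation among the proposed family already involves only finitely many indices $l\in S_{\nu_i(V,p(f))}$ and finitely many powers of $f$.

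First, I would suppose given a finite linear combination
$$\sum_{\substack{0\le m\le d-1\\ l\in F}} \lambda_{ml}\, f^m({v}^{i}_l)=0$$
with $F\subset S_{\nu_i(V,p(f))}$ finite and $\lambda_{ml}\in k$. Reinterpreting this through the morphism $\phi_f\colon k[x]\to \ed_k(V)$, the same equation reads
$$\sum_{\substack{0\le m\le d-1\\ l\in F}} \lambda_{ml}\, x^m\, {v}^{i}_l = 0$$
inside the $k[x]$-module $(\Ker p(f)^i)_f$.

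Next, I would apply the projection $\pi_i$, which by the remark preceding Lemma \ref{lem:adsfst} respects both the $k$-linear and the $k[x]$-module structures, to obtain
$$\sum_{l\in F} \Bigl(\sum_{m=0}^{d-1}\lambda_{ml}\, x^m\Bigr)\, {\bar v}^{i}_l = 0$$
in the $K$-vector space $\Ker p(f)^i\big/[\Ker p(f)^{i-1}+p(f)\Ker p(f)^{i+1}]$. Each coefficient $\sum_m \lambda_{ml}\,x^m$ is represented by a polynomial of degree strictly less than $d=\deg p(x)$; since $p(x)$ is irreducible of degree $d$, any such polynomial is nonzero modulo $p(x)$ as soon as not all of its $\lambda_{ml}$ vanish. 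If some $\lambda_{ml}$ were nonzero, this would therefore produce a nontrivial $K$-linear relation among the ${\bar v}^{i}_l$, contradicting that $\{{\bar v}^{i}_h\}_{h\in S_{\nu_i(V,p(f))}}$ was chosen to be a $K$-basis. Hence all $\lambda_{ml}=0$ and the family is $k$-linearly independent.

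There is really no hard step: the potential obstacle would be if the infinite cardinality of $S_{\nu_i(V,p(f))}$ forced us to argue about infinite sums, but the very definition of linear independence sidesteps this, so the finite-dimensional proof carries over without modification.
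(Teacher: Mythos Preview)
Your proposal is correct and follows essentially the same approach as the paper's own proof: assume a (finite) linear dependence, rewrite it via the $k[x]$-module structure, and pass to the quotient to contradict the $K$-linear independence of the $\{\bar v^i_h\}$. If anything, you are slightly more explicit than the paper in invoking $\pi_i$ and in explaining why a nonzero polynomial of degree $<d$ represents a nonzero element of $K=k[x]/p(x)$.
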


\begin{proof} With the same arguments as in Lemma \ref{lem:adsfst}, if $L\subseteq S_{{\nu}_i (V,p(f))}$, let us consider a linear combination $$\sum_{l\in L} \big [ \sum_{0 \leq m \leq d-1} \lambda_{ml} f^m ({v}^{i}_l) \big ] = 0\, ,$$\noindent with $\lambda_{hl} \in k$, $\underset {l\in L} \sum$ being a well-defined expression in $\Ker p(f)^i$.

Hence, since $$\sum_{l\in L} \big [ \sum_{0 \leq m \leq d-1} \lambda_{ml} x^m {v}^{i}_l \big ] = 0$$ as a $k[x]$-module, if $\lambda_{ml} \ne 0$ in some case, then $\{{\bar v}^{i}_h\}_{h \in S_{{\nu}_i (V,p(f))}}$ will be a family of linearly dependent vectors of $\Ker p(f)^i \big / [\Ker p(f)^{i-1} + p(f) \Ker p(f)^{i+1}]$ as a $K$-vector space, which is impossible. Therefore, the statement is deduced.
\end{proof}

\begin{prop} \label{prop:asader-2} If $H^{p(f)}_i$ is the $k$-subspace of $\Ker p(f)^i$ generated by the family of vectors $$\bigcup_{h \in S_{{\nu}_i (V,p(f))}} \{{v}^{i}_h, f ({v}^{i}_h), f^{2} ({v}^{i}_h), \dots , f^{d-1} ({v}^{i}_h)\}\, ,$$\noindent then $H^{p(f)}_i$ is a supplementary subspace of $\Ker p(f)^{i-1} + p(f) \Ker p(f)^{i+1}$ on the $k$-vector space $\Ker p(f)^i$.
\end{prop}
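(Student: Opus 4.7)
The finite-dimensional proof (Proposition \ref{prop:asader}) relied on a dimension count to reduce the claim to showing trivial intersection; that shortcut is unavailable here, so I plan to verify directly both the direct-sum condition and the spanning condition. The underlying ingredients, however, are essentially the same: the $K$-linear independence of $\{\bar v^i_h\}_{h \in S_{\nu_i(V,p(f))}}$ in the quotient, the definition of the representatives $v^i_h$, and Lemma \ref{lem:adsfst-2}.

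For the trivial intersection, I would take an element $e \in [\Ker p(f)^{i-1} + p(f)\Ker p(f)^{i+1}] \cap H^{p(f)}_i$ and write it in the explicit form guaranteed by $H^{p(f)}_i$, namely $e = \sum_{l \in L} \sum_{0 \le m \le d-1} \gamma_{ml}\, f^m(v^i_l)$ for some finite $L \subseteq S_{\nu_i(V,p(f))}$ and $\gamma_{ml} \in k$. Applying $\pi_i$ kills $e$ (since $e \in \Ker p(f)^{i-1} + p(f)\Ker p(f)^{i+1}$), yielding the relation $\sum_{l \in L} \bigl[\sum_m \gamma_{ml}\, x^m\bigr] \bar v^i_l = 0$ in the $K$-vector space. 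Linear independence of the $\bar v^i_l$ over $K$ forces each inner polynomial (of degree $\leq d-1$) to lie in $(p(x))$, hence to vanish, so all $\gamma_{ml}=0$ and $e=0$. This is essentially the same reasoning already used in Lemma \ref{lem:adsfst-2}, merely packaged to yield $H^{p(f)}_i \cap [\Ker p(f)^{i-1} + p(f)\Ker p(f)^{i+1}] = \{0\}$.

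For the spanning, I would start with an arbitrary $e \in \Ker p(f)^i$ and look at its image $\pi_i(e)$ in the quotient, which is a $K$-vector space with basis $\{\bar v^i_h\}$. Thus there is a finite subset $L$ and coefficients $\alpha_l \in K$ with $\pi_i(e) = \sum_{l \in L} \alpha_l \bar v^i_l$. Lifting each $\alpha_l$ to a polynomial representative $\sum_{m=0}^{d-1} \lambda_{ml} x^m \in k[x]$ of degree less than $d$, we obtain $\pi_i(e) = \pi_i\bigl(\sum_{l\in L}\sum_m \lambda_{ml}\, f^m(v^i_l)\bigr)$, so the difference $e - \sum_{l,m} \lambda_{ml}\, f^m(v^i_l)$ lies in $\Ker \pi_i = \Ker p(f)^{i-1} + p(f)\Ker p(f)^{i+1}$. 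Since the vector subtracted belongs to $H^{p(f)}_i$ by definition, we conclude $\Ker p(f)^i = H^{p(f)}_i + [\Ker p(f)^{i-1} + p(f)\Ker p(f)^{i+1}]$.

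Combining the two parts gives the desired direct-sum decomposition. The only genuinely new point compared with the finite-dimensional setting is the verification of spanning, which is what dimension counting silently gave us before; but once one lifts $K$-linear combinations to degree-$<d$ representatives in $k[x]$ acting through $f$, it is immediate. No further obstacle arises: the index sets $S_{\nu_i(V,p(f))}$ may be arbitrarily large, yet all sums in question remain well-defined because each individual element of $\Ker p(f)^i$ expresses itself through only finitely many of the $v^i_l$.
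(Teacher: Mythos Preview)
Your proof is correct and follows essentially the same approach as the paper: both verify the spanning condition by lifting the $K$-basis expansion of $\pi_i(e)$ to degree-$<d$ polynomial representatives acting through $f$, and both verify trivial intersection by applying $\pi_i$ and invoking the $K$-linear independence of the $\bar v^i_h$. The only cosmetic difference is that the paper treats spanning first and intersection second, whereas you reverse the order.
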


\begin{proof} Given a vector $v\in \Ker p(f)^i$, considering the structure of $$\Ker p(f)^i \big / [\Ker p(f)^{i-1} + p(f) \Ker p(f)^{i+1}]$$\noindent as a $K$-vector space, one has that $$\pi_i (v) = \sum_{l\in L} \mu_l q_l(x) {\bar v}^i_l\, ,$$\noindent with $q_l(x) = a_0^l + a_1^l x + \dots + a_{d-1}^l x^{d-1} \in k[x]/p(x) = K$, $L\subseteq S_{{\nu}_i (V,p(f))}$, $\underset {l\in L} \sum$ being a well-defined expression in $\Ker p(f)^i \big / [\Ker p(f)^{i-1} + p(f) \Ker p(f)^{i+1}]$.

Since from the definition of the sum on the quotient set we deduce that $\underset {l\in L} \sum$ is also a well-defined expression in $\Ker p(f)^i$, bearing in mind that $$\pi_i \big ( v - \sum_{l\in L} \big [ \sum_{0 \leq m \leq d-1} \mu_l a_m^l f^m ({v}^{i}_l) \big ] \big ) = 0\, ,$$ it clear that $\Ker p(f)^i = [\Ker p(f)^{i-1} + p(f) \Ker p(f)^{i+1}] + H^{p(f)}_i$.

Moreover, analogously to Proposition \ref{prop:asader}, it is easy to check that $$[\Ker p(f)^{i-1} + p(f) \Ker p(f)^{i+1}] \cap H^{p(f)}_i = \{0\}\, ,$$\noindent and the claim is proved.
\end{proof}

    Thus, recurrently, as in expressions (\ref{eq:decomp-anh}), we can consider $k$-vector subspaces of $V$, $\{H_{r}^{p(f)}\}_{1\leq r \leq n}$, such that:
\begin{equation} \label{eq:adad} \Ker f^i = \Ker f^{i-1} \oplus p(f)^{n-i} H_n^{p(f)} \oplus \dots \oplus p(f)H_{i+1}^{p(f)} \oplus H_i^{p(f)}\, ,\end{equation}\noindent for every $1\leq i < n$.

\begin{rem} Recall from \cite{Pa} that there is no relationship of order between the invariants $\{\nu_i(V,p(f))\}$.

    For example, if $V$ is a $k$-vector space of countable dimension with a basis $\{e_1,e_2,\dots,e_n,\dots\}$, and we consider $f,g \in \in X^{anh}_V$ defined by:
$$f(e_i) = \left \{ {\begin{aligned} e_3  &\text{ if } i = 1,2 \\ 0  &\text{ if } i \geq 3 \end{aligned}} \right . \, ,$$

$$g(e_j) = \left \{ {\begin{aligned} 0 &\text{ if } j = 1,2 \\ e_{j+1} &\text{ if } j \geq 3 \text{ odd } \\ 0 &\text{ if } j \geq 4 \text{ even } \end{aligned}} \right . \, ,$$\noindent then
$a_f(x) = a_g (x) = x^2$, and one has that:
\begin{itemize}
\item  $H_2^f = <e_1>$, $f(H_2^f) = <e_3>$, and $H_1^f = <e_1 - e_2, e_4, e_5, ...>$;
\item $\nu_1 (V,f) ) = \aleph_0$, and $\nu_2 (V,f) ) = 1$;
\item $H_2^g = <e_{2j + 1}>_{j\geq 1}$, $g(H_2^g) = <e_{2j}>_{j\geq 2}$, and $H_1^g = <e_1,e_2>$;
\item $\nu_1 (V,2) ) = 2$, and $\nu_2 (V,f) ) = \aleph_0$;
 \end{itemize}
$\aleph_0$ being the cardinal of the set of all natural numbers.

\end{rem}

    If we now write $$<v^{i}_h>_f = \underset {0\leq s \leq i - 1} \bigcup  \{p(f)^{s}[v^{i}_h], p(f)^{s} [f(v^{i}_h)], \dots , p(f)^{s}[f^{d -1}(v^{i}_h)]\}\, ,$$\noindent then
$$\underset {\begin{aligned} 1 &\leq i \leq n \\  h &\in S_{{\nu}_i (V,p(f))} \end{aligned}} {\bigcup} <v^{i}_h>_f$$\noindent is a Jordan basis of $V$ for $f$.

In general, if $f\in X^{anh}_V$ with annihilator polynomial $$a_f(x) = p_1(x)^{n_1}\cdot \dots \cdot p_r(x)^{n_r}\, ,$$\noindent we have shown that there exist families of vectors $\{v^{ij}_h\}_{ h \in S_{{\nu}_i (V,p_j(f))}}$ with
 $$\begin{aligned} v^{ij}_h &\in \Ker p_j(f)^i \\ v^{ij}_h &\notin \Ker p_j(f)^{i-1} + p_j(f) \Ker p_j(f)^{i+1}\, ,\end{aligned}$$\noindent for all $1\leq j \leq r$ and $1\leq i \leq n_j$, such that if we set
 $$<v^{ij}_h>_f = {\underset {0\leq s \leq i - 1} \bigcup}  \{p_j(f)^{s}[v^{ij}_h], p_j(f)^{s} [f(v^{ij}_h)], \dots , p_j(f)^{s}[f^{d_j -1}(v^{ij}_h)]\}\, ,$$\noindent then
$$\underset {\begin{aligned}1 &\leq j \leq r  \\ 1 &\leq i \leq n_j \\  h &\in S_{{\nu}_i (V,p_j(f))} \end{aligned}} {\bigcup} <v^{ij}_h>_f$$\noindent is a Jordan basis of $V$ for $f$.

Indeed, each family of vectors \begin{equation} \label{eq:bases-j-infinite} \underset {\begin{aligned} 1 &\leq j \leq r  \\ 1 &\leq i \leq n_j  \end{aligned}} {\bigcup} \{v^{ij}_h\}_{h \in S_{{\nu}_i (V,p_j(f))}}\end{equation} generates a Jordan basis of $V$ for $f$.

 Note that $$\# \big [ \underset {\begin{aligned} 1 &\leq j \leq r  \\ 1 &\leq i \leq n_j  \end{aligned}} {\bigcup} (S_{{\nu}_i (V,p_j(f))} \sqcup \overset {[d-1]i)} {\dots \dots \dots} \sqcup S_{{\nu}_i (V,p_j(f))}) \big ] = \dim (V)\, .$$

\begin{rem}
Recall from \cite{LB} that for every  $\phi \in \ed_k (V)$
possessing an annihilating polynomial of an arbitrary
infinite-dimensional vector space $V$ there exists a Jordan basis
of $V$ associated with $\phi$. We should note that the above construction of Jordan bases for endomorphisms with an annihilating polynomial is compatible with the results of \cite{LB}. However, from the proof of the existence of Jordan bases given in \cite{LB} a Classification Theorem for these endomorphisms can not be obtained, because from the statements of this paper it is not possible to deduce that the dimensions of the vector subspaces that determine a Jordan basis are independent of the choices made.
\end{rem}

We shall now use the existence of Jordan bases for endomorphisms with an annihilating polynomial to characterize the quotient set $X^{anh}_V\big /{\aut_k(V)}$.

Let $\tau$ be an automorphism of $V$.

\begin{lem} \label{lem:palace} If $v\in V$, then $$p(f)^s (v) = 0 \Longleftrightarrow p({\bar f})^s (\tau (v)) = 0$$\noindent with ${\bar f} = \tau f \tau^{-1}$.
\end{lem}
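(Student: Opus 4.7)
The plan is to reduce the equivalence to the basic identity that conjugation by an automorphism commutes with polynomial evaluation. Concretely, from $\bar f = \tau f \tau^{-1}$ an immediate induction on $n$ shows $\bar f^n = \tau f^n \tau^{-1}$, and by $k$-linearity this extends to $q(\bar f) = \tau \, q(f) \, \tau^{-1}$ for every $q(x) \in k[x]$. Applying this with $q(x) = p(x)$ and then raising to the $s$-th power gives
\[
p(\bar f)^s = \bigl(\tau\, p(f)\, \tau^{-1}\bigr)^{s} = \tau\, p(f)^s\, \tau^{-1},
\]
where the middle equality is the standard telescoping in which all intermediate copies of $\tau^{-1}\tau$ cancel.

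From here the equivalence is essentially formal. Evaluating at $\tau(v)$ yields
\[
p(\bar f)^s\bigl(\tau(v)\bigr) \;=\; \tau\, p(f)^s\, \tau^{-1}\bigl(\tau(v)\bigr) \;=\; \tau\bigl(p(f)^s (v)\bigr).
\]
Since $\tau \in \aut_k(V)$ is in particular injective, $\tau(w) = 0$ if and only if $w = 0$. Applying this with $w = p(f)^s(v)$ gives $p(\bar f)^s(\tau(v)) = 0 \Longleftrightarrow p(f)^s(v) = 0$, which is exactly the asserted biconditional.

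There is no real obstacle here; the statement is a straightforward compatibility between the $k[x]$-module structures on $V$ induced by $f$ and by $\bar f$ via the transport-of-structure isomorphism $\tau$. The only points to verify are the two routine ingredients already used above: the multiplicativity $\bar f^n = \tau f^n \tau^{-1}$ (proved by induction, the base case being the definition of $\bar f$) and its extension to arbitrary polynomials by $k$-linearity, together with the injectivity of $\tau$. Both are immediate, so the proof will be short.
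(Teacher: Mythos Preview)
Your argument is correct and follows essentially the same route as the paper: both rely on the identity $p(\bar f)^s(\tau(v)) = \tau\bigl(p(f)^s(v)\bigr)$ together with the injectivity of $\tau$. The paper compresses this into a single line, while you spell out the derivation of $q(\bar f) = \tau\, q(f)\, \tau^{-1}$ in more detail, but the content is the same.
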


\begin{proof} One has that: $$p({\bar f})^s (\tau (v)) \Longleftrightarrow \tau [p(f)^s (v)] = 0 \Longleftrightarrow p(f)^s (v) = 0\, .$$
\end{proof}

\begin{cor} \label{cor:palace} If ${\bar f} = \tau f \tau^{-1}$, then
\begin{itemize}
\item $\tau [\Ker p(f)^r] = \Ker p({\bar f})^r$ for all $r\geq 1$ and $p(x) \in k[x]$.
\item $\tau [\Ker p(f)^{i-1} + p(f)\Ker p(f)^{i+1}] = \Ker p({\bar f})^{i-1} + p({\bar f})\Ker p({\bar f})^{i+1}$ for all $i\geq 1$ and $p(x)\in k[x]$.
\end{itemize}
\end{cor}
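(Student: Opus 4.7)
The plan is to derive both bullet points as direct formal consequences of Lemma \ref{lem:palace}, combined with the fact that conjugation by $\tau$ intertwines polynomial expressions in $f$ and $\bar f$. Concretely, since $\bar f = \tau f \tau^{-1}$, one has $\bar f^m = \tau f^m \tau^{-1}$ for every $m$, and therefore $p(\bar f) = \tau \, p(f) \, \tau^{-1}$ for any $p(x)\in k[x]$. This single identity, together with the bijectivity of $\tau$, will drive everything.

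For the first bullet I would argue pointwise: $v\in \Ker p(f)^r$ iff $p(f)^r(v)=0$, which by Lemma \ref{lem:palace} (applied with the exponent $s=r$) is equivalent to $p(\bar f)^r(\tau(v))=0$, i.e., $\tau(v)\in \Ker p(\bar f)^r$. Thus $\tau[\Ker p(f)^r]\subseteq \Ker p(\bar f)^r$. Running the same argument with $\tau^{-1}$ in place of $\tau$ (and $\bar f$ in place of $f$) gives the reverse inclusion, so equality holds.

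For the second bullet, I would use the $k$-linearity of $\tau$ to split the image of a sum as the sum of the images:
\[
\tau\bigl[\Ker p(f)^{i-1} + p(f)\Ker p(f)^{i+1}\bigr] \;=\; \tau[\Ker p(f)^{i-1}] \,+\, \tau\bigl[p(f)\Ker p(f)^{i+1}\bigr].
\]
The first summand equals $\Ker p(\bar f)^{i-1}$ by the first bullet. For the second, the intertwining relation $\tau \circ p(f) = p(\bar f) \circ \tau$ gives $\tau\bigl[p(f)\Ker p(f)^{i+1}\bigr] = p(\bar f)\bigl[\tau[\Ker p(f)^{i+1}]\bigr]$, and applying the first bullet again with $r=i+1$ yields $p(\bar f)\,\Ker p(\bar f)^{i+1}$. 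Combining gives the desired equality.

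There is no real obstacle here; the statement is essentially a bookkeeping corollary. The only point that requires a mild care is checking that $\tau$ commutes with $p(f)$ up to conjugation (i.e., $\tau p(f) \tau^{-1} = p(\bar f)$), which follows by linearity from the polynomial identity applied to $\bar f = \tau f \tau^{-1}$; once this is written down, both items are one-line equivalences.
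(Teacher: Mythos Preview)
Your proposal is correct and matches the paper's intent: the paper states this corollary without proof, treating both items as immediate consequences of Lemma~\ref{lem:palace} and the conjugation identity $p(\bar f)=\tau\,p(f)\,\tau^{-1}$, exactly as you spell out.
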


\begin{prop} \label{prop:avila} Let $f\in X^{anh}_V$ with $a_f(x) = p_1(x)^{n_1}\cdot \dots \cdot p_r(x)^{n_r}$, $p_i (x)$ being a irreducible polynomial in $k[x]$. If ${\bar f} = \tau f \tau^{-1}$. Then:
\begin{itemize}
\item $a_{\bar f}(x) = a_f(x)$.

\item ${\nu}_i (V,p_j(f)) = {\nu}_i (V,{p_j(\bar f)})$ for all $1 \leq j \leq r$ and $1\leq i \leq n_j$.
\end{itemize}
\end{prop}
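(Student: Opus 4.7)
The plan is to deduce both assertions from Corollary \ref{cor:palace} together with the algebraic identity $p(\bar f) = \tau\, p(f)\, \tau^{-1}$, which holds for every $p(x)\in k[x]$ by distributivity of conjugation over sums and products (already implicitly used in the proof of Lemma \ref{lem:palace}).

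For the first bullet, I would observe that since $p(\bar f) = \tau\, p(f)\, \tau^{-1}$ and $\tau$ is invertible, $p(\bar f) = 0$ if and only if $p(f) = 0$. Hence $\Ker \phi_{\bar f} = \Ker \phi_f$, and since both $a_f(x)$ and $a_{\bar f}(x)$ are the unique monic generators of this common ideal, they coincide. In particular the irreducible decomposition $a_{\bar f}(x) = p_1(x)^{n_1}\cdots p_r(x)^{n_r}$ is valid for $\bar f$ as well, which is what we need for the second bullet to make sense.

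For the second bullet, the strategy is to exhibit a $K$-linear isomorphism
$$\Psi_{i,j}\colon \Ker p_j(f)^i \big/ [\Ker p_j(f)^{i-1} + p_j(f)\Ker p_j(f)^{i+1}] \longrightarrow \Ker p_j(\bar f)^i \big/ [\Ker p_j(\bar f)^{i-1} + p_j(\bar f)\Ker p_j(\bar f)^{i+1}]$$
induced by $\tau$, where $K = k[x]/p_j(x)$. By Corollary \ref{cor:palace} the automorphism $\tau$ restricts to a $k$-linear bijection $\Ker p_j(f)^i \iso \Ker p_j(\bar f)^i$ carrying the subspace $\Ker p_j(f)^{i-1} + p_j(f)\Ker p_j(f)^{i+1}$ onto $\Ker p_j(\bar f)^{i-1} + p_j(\bar f)\Ker p_j(\bar f)^{i+1}$, so $\Psi_{i,j}$ is a well-defined $k$-linear isomorphism on the quotients. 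The remaining point is that $\Psi_{i,j}$ is $K$-linear when the source is viewed as a $K$-vector space via $f$ and the target via $\bar f$; this follows because the equality $\tau\, f = \bar f\, \tau$ forces $\tau\circ q(f) = q(\bar f)\circ \tau$ for every $q(x)\in k[x]$, and the $K$-structure on each quotient is obtained by descending these $k[x]$-actions (which are well defined because $p_j(f)$ and $p_j(\bar f)$ send the numerator into the denominator on each side).

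Once $\Psi_{i,j}$ is in place, the equality ${\nu}_i(V,p_j(f)) = {\nu}_i(V,p_j(\bar f))$ follows from the invariance of $K$-vector-space dimension under $K$-linear isomorphism (valid for arbitrary cardinalities). I do not anticipate a serious obstacle; the only conceptual subtlety is recognising that one must compare the two quotients as $K$-vector spaces rather than merely as $k$-vector spaces, and that the map $\tau$ intertwines the two distinct $K$-structures precisely because $\tau$ intertwines $f$ with $\bar f$.
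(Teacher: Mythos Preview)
Your proposal is correct and follows essentially the same route as the paper: first use $q(\bar f)=\tau\,q(f)\,\tau^{-1}$ to conclude $a_{\bar f}(x)=a_f(x)$, then invoke Corollary~\ref{cor:palace} to see that $\tau$ induces a $k$-linear isomorphism on the relevant quotients, and finally check that this map is in fact $K$-linear because $\tau$ intertwines the $k[x]$-actions given by $f$ and $\bar f$. The only difference is presentational---the paper phrases the intertwining as $\tau\bigl[\sum f^{r_s}(w_s)\bigr]=\sum \bar f^{r_s}(\tau[w_s])$ while you write $\tau\circ q(f)=q(\bar f)\circ\tau$---but the argument is the same.
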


\begin{proof}
Given $q(x) \in k[x]$, it is clear that $$q({\bar f}) = 0 \Longleftrightarrow q(f) = 0\, ,$$
and hence $a_{\bar f}(x) = a_f(x)$. In particular, ${\bar f}\in X^{anh}_V$.

    On the other hand, for all $1 \leq j \leq r$ and $1\leq i \leq n_j$, bearing in mind Corollary \ref{cor:palace}, one has that

\xymatrix{
0 \ar[r] & \Ker p_j(f)^{i-1} + p_j(f)(\Ker p_j(f)^{i+1}) \ar[r] \ar[d]^{\sim}_{\tau} &  \Ker p_j(f)^i \ar[d]_{\sim}^{\tau}  \\
0 \ar[r] & \Ker p_j({\bar f})^{i-1} + p_j({\bar f})(\Ker p_j({\bar f})^{i+1}) \ar[r] &  \Ker p_j({\bar f})^i
}

and hence $\tau$ induces an isomorphism of $k$-vector spaces

\begin{equation} \label{eq: ismo-basic} \xymatrix{\Ker p_j(f)^i  \big / [\Ker p_j(f)^{i-1} + p_j(f)\Ker p_j(f)^{i+1}] \ar[d]^{\sim}_{\tau} \\ \Ker p_j({\bar f})^i  \big / [\Ker p_j({\bar f})^{i-1} + p_j({\bar f})(\Ker p_j({\bar f})^{i+1})]\, .} \end{equation}

Furthermore, for each family of vectors $\{w_s\}_{s\in I} \subset \Ker p_j(f)^i$, it is clear that $$\tau [\sum_{r_s \in {\mathbb N}; s\in I} f^{r_s} (w_s)] = \sum_{r_s \in {\mathbb N}; s\in I} {\bar f}^{r_s} (\tau [w_s])\, .$$

Thus, if we consider the structures of $k[x]$-module induced in $\Ker p_j(f)^i$ by $f$ and in $\Ker p_j({\bar f})^i$ by ${\bar f}$ respectively, we have that
$$\tau [\sum_{r_s \in {\mathbb N}; s\in I} x^{r_s} (w_s)] = \sum_{r_s \in {\mathbb N}; s\in I} x^{r_s} (\tau [w_s])\, ,$$\noindent
and, therefore, expression (\ref{eq: ismo-basic}) is an isomorphism of $K$-vector spaces, from which the statement can be deduced.
\end{proof}

\begin{thm}[Classification Theorem] \label{th:class-anh} Let $f, g \in X^{anh}_V$ be two endomorphisms with an annihilating polynomial. Thus, $f \sim g$ (mod. $\aut_k(V)$) if and only if:
\begin{itemize}
\item $a_{g}(x) = a_f(x) = p_1(x)^{n_1}\cdot \dots \cdot p_r(x)^{n_r}$, $p_i (x)$ being a irreducible polynomial in $k[x]$.

\item ${\nu}_i (V,p_j(f)) = {\nu}_i (V,{p_j(g)})$ for all $1 \leq j \leq r$ and $1\leq i \leq n_j$.
\end{itemize}
\end{thm}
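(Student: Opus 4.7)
The ``only if'' direction is immediate from Proposition \ref{prop:avila}.

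For the ``if'' direction, my plan is to construct a conjugating automorphism explicitly by matching Jordan bases of $V$ for $f$ and for $g$. Under the hypotheses, the Jordan basis construction described before the theorem produces a basis of $V$ consisting of the vectors $p_j(f)^s f^m(v^{ij}_h)$ with $0\leq s\leq i-1$, $0\leq m\leq d_j-1$, as $(j,i,h)$ ranges over the allowed tuples, and an analogous basis built from vectors $w^{ij}_h$ for $g$. The hypothesis $\nu_i(V,p_j(f))=\nu_i(V,p_j(g))$ lets me identify the index sets $S_{\nu_i(V,p_j(f))}=S_{\nu_i(V,p_j(g))}$, so both bases are parametrized by the same set of quintuples $(j,i,h,s,m)$.

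I would then define $\tau\in\aut_k(V)$ to be the unique $k$-linear map sending $p_j(f)^s f^m(v^{ij}_h)$ to $p_j(g)^s g^m(w^{ij}_h)$; this is an automorphism of $V$ because it is a bijection on bases. To verify $g\tau=\tau f$, I check the identity on each basis vector $b=p_j(f)^s f^m(v^{ij}_h)$. When $m<d_j-1$, both $f(b)$ and $g(\tau(b))$ are again basis vectors with matching indices, so the equality is automatic. When $m=d_j-1$, writing $p_j(x)=x^{d_j}+c_{d_j-1}x^{d_j-1}+\dots+c_0$, I rewrite
\[
f(b) = p_j(f)^{s+1}(v^{ij}_h) - \sum_{l=0}^{d_j-1} c_l\, p_j(f)^s f^l(v^{ij}_h),
\]
where the leading summand vanishes when $s=i-1$ because $v^{ij}_h\in\Ker p_j(f)^i$. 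Applying $\tau$ term by term and using the identical relation with $g$ and $w^{ij}_h$ in place of $f$ and $v^{ij}_h$ yields $\tau(f(b))=g(\tau(b))$.

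The main obstacle is this wraparound case $m=d_j-1$: the verification that the relation expressing $f^{d_j}$ in terms of $p_j(f)$ and lower powers of $f$ transports cleanly through $\tau$. It works because the coefficients $c_l$ depend only on $p_j$, and because $v^{ij}_h\in\Ker p_j(f)^i$ and $w^{ij}_h\in\Ker p_j(g)^i$ ensure the overflow term $p_j(\cdot)^{s+1}$ vanishes at exactly the same indices in both bases. Once the intertwining is established, $g=\tau f\tau^{-1}$ follows.
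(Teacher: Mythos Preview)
Your proposal is correct and follows essentially the same approach as the paper: both construct $\tau$ by matching the Jordan bases indexed by the same parameter sets and then verify $\tau f = g\tau$ on basis vectors. The only difference is cosmetic: the paper packages your wraparound case into the single assertion that $\tau[p_j(f)^m f^b(v^{ij}_h)] = p_j(g)^m g^b(w^{ij}_h)$ holds for \emph{all} $b\ge 0$ (not just $0\le b\le d_j-1$), whereas you unpack this explicitly using the relation $f^{d_j}=p_j(f)-\sum_l c_l f^l$.
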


\begin{proof}

$\Longrightarrow )$ If $[f] = [g] \in X^{anh}_V\big /{\aut_k(V)}$, there exists $\tau \in \aut_k (V)$ such that $g = \tau f \tau^{-1}$, and it follows from Proposition \ref{prop:avila} that:

\begin{itemize}
\item $a_{g}(x) = a_f(x) = p_1(x)^{n_1}\cdot \dots \cdot p_r(x)^{n_r}$, $p_i (x)$ being a irreducible polynomial in $k[x]$.

\item ${\nu}_i (V,p_j(f)) = {\nu}_i (V,{p_j(g)})$ for all $1 \leq j \leq r$ and $1\leq i \leq n_j$.
\end{itemize}

$\Longleftarrow )$ If $a_{g}(x) = a_f(x) = p_1(x)^{n_1}\cdot \dots \cdot p_r(x)^{n_r}$ and ${\nu}_i (V,p_j(f)) = {\nu}_i (V,{p_j(g)})$ for all $1 \leq j \leq r$ and $1\leq i \leq n_j$, let us consider two families of vectors $$\underset {\begin{aligned} 1 &\leq j \leq r  \\ 1 &\leq i \leq n_j  \end{aligned}} {\bigcup} \{v^{ij}_h\}_{h \in S_{{\nu}_i (V,p_j(f))}} \text{ and } \underset {\begin{aligned} 1 &\leq j \leq r  \\ 1 &\leq i \leq n_j  \end{aligned}} {\bigcup} \{w^{ij}_h\}_{h \in S_{{\nu}_i (V,p_j(g))}}$$\noindent determining Jordan bases of $V$ for $f$ and $g$ respectively.

Let $\tau \in \aut_k(V)$ be the automorphism defined by $$\tau [p_j(f)^m (f^a (v^{ij}_h))] = p_j(g)^m (g^a (w^{ij}_h))\, ,$$\noindent for all  $1\leq j \leq r$, $1\leq i \leq n_j$, $h \in S_{{\nu}_i (V,p_j(g))}$, $0\leq a \leq d_j-1$, and $0\leq m < i$.

An easy computation shows that
$$\tau [p_j(f)^m (f^b (v^{ij}_h))] = p_j(g)^m (g^b (w^{ij}_h))\, ,$$\noindent for all $b\geq 0$.

By construction, one has that $$\begin{aligned} (\tau f \tau^{-1})[p_j(g)^m (g^a (w^{ij}_h))] &= (\tau f) [p_j(f)^m (f^a (v^{ij}_h))] = \tau [p_j(f)^m (f^{a+1} (v^{ij}_h))] = \\ &=  p_j(g)^m (g^{a+1} (v^{ij}_h)) = g[p_j(g)^m (g^{a} (v^{ij}_h))] \, ,\end{aligned}$$\noindent for all  $1\leq j \leq r$, $1\leq i \leq n_j$, $h \in S_{{\nu}_i (V,p_j(g))}$, $0\leq a \leq d_j-1$, and $0\leq m < i$.

Accordingly, $\tau f \tau^{-1} = g$ and $[f] = [g] \in X^{anh}_V\big /{\aut_k(V)}$.
\end{proof}

\begin{exam} \label{ex:countable} Let $V$ be a countable dimensional $k$-vector space. Let $$Y = \{0\} \cup {\mathbb N} \cup \{\aleph_0\}\, ,$$\noindent $\aleph_0$ being the cardinal of the set of all natural numbers, and let $H$ be the set consisting of all monic irreducible polynomials $p(x) \in k[x]$.

If $H_r \subset H\times \overset {r)} {\dots \dots} \times H$ is the set such that $$(p_1(x), \dots , p_r(x)) \in H_r \Longleftrightarrow p_j(x) \ne p_h(x) \text{ for all } j\ne h\, ,$$\noindent and $$Y_{{\mathbb N}} = \{(n,\nu_1, \dots , \nu_n) \text{ with } n\in {\mathbb N} \text{ and } \nu_i \in Y\}\, ,$$\noindent it follows from Theorem \ref{th:class-anh} that
$$X^{anh}_V\big /{\aut_k(V)} = \underset {r\in {\mathbb N}} \bigcup \big ( X_r \times [ {\prod_r}' Y_{{\mathbb N}}] \big )\, ,$$\noindent where $${\prod_r}' Y_{{\mathbb N}} = \left \{\begin{aligned} &(n_1, \nu_{11}, \dots , \nu_{1n_1}, \dots, n_i, \nu_{i1}, \dots , \nu_{in_i}, \dots ,n_r, \nu_{r1}, \dots , \nu_{rn_r}) \\ &\text{ with } \nu_{ji}\in Y\, , \, \nu_{in_i} \ne 0 \, , \text{ and } \nu_{ji} = \aleph_0  \text{ for at least one } \nu_{ji}\end{aligned} \right \}\, .$$
\end{exam}

\begin{rem} Recently, in \cite{HP}, D. Hern\'andez Serrano and the author have offered an algebraic definition of infinite determinants $\Det^k_V(1 +\varphi)$ on an arbitrary $k$-vector space $V$, $\varphi$ being a finite potent endomorphism. It is known that $\varphi$ is finite potent if and only if $a_{\varphi} (x) = x^m\cdot p(x)$, with $(x,p(x)) = 1$ and $\dim_k \Ker p(\varphi) < \infty$. Accordingly, $a_{1 + \varphi} (x) = (x-1)^m p(x-1)$, and since $$1 + \varphi \text{ is invertible } \Longleftrightarrow \Det^k_V(1 +\varphi) \ne 0\, ,$$\noindent it follows from the above classification and the statements of \cite{HP} that for each finite potent endomorphism $\varphi$ $1 + \varphi$ is invertible if and only if ${\nu}_i (V,1+\varphi) = 0$ for all $i>0$.
\end{rem}

\begin{rem}[Final Consideration] If $E$ is a finite-dimensional $k$ vector space, and $f\in \ed_k (E)$ with $a_f(x) = p_1(x)^{n_1}\cdot \dots \cdot p_r(x)^{n_r}$, $p_i (x)$ being a irreducible polynomial in $k[x]$, and invariants $\{{\nu}_i (V,p_j(f))\}_{1\leq j \leq r; 1\leq i \leq n_r}$, we should note that the structure of $E$ as a $k[x]$-module induced by $f$ is: $$E_f \simeq \underset {\begin{aligned} 1 &\leq j \leq r  \\ 1 &\leq i \leq n_j  \end{aligned}} \bigoplus {\big ( k[x]/p_j(x)^i \big )}^{{\nu}_i (V,p_j(f))} \ \, .$$
Thus, Theorem \ref{th:class-anh} offers a classification of endomorphisms with an annihilator polynomial on arbitrary vector spaces that generalizes the well-known classification of endomorphisms on finite-dimensional vector spaces.
\end{rem}

\end{document}